\newcommand{\GL}{\operatorname{GL}}
\newcommand{\SL}{\operatorname{SL}}
\newcommand{\GCD}{\operatorname{GCD}}
\newcommand{\bC}{\mathbb{C}}
\newcommand{\bR}{\mathbb{R}}
\newcommand{\zed}{\mathbb{Z}}
\newcommand{\sgn}{\mathrm{sgn}}
\newcommand{\SO}{\mathrm{SO}}
\newcommand{\sE}{\mathscr{E}}
\newcommand{\sL}{\mathscr{L}}
\newcommand{\sR}{\mathscr{R}}
\newcommand{\sS}{\mathscr{S}}
\newcommand{\sX}{{\mathscr{X}}}
\newtheorem{theorem}{Theorem}
\newtheorem{lemma}[theorem]{Lemma}
\newtheorem{proposition}[theorem]{Proposition}
\theoremstyle{remark}
\newtheorem*{rem}{Remark}
\title{Maass form twisted Shintani $\sL$-functions}
\author{Bob Hough}
\address{School of Mathematics, Institute for Advanced Study,  1 Einstein
Drive,
Princeton, NJ, 08540}
\email{hough@math.ias.edu}
\subjclass[2010]{Primary  11E45, 11M41, 11F72}
\keywords{Equidistribution, Shintani zeta function, Maass form}
\thanks{This material is based upon work supported by the National Science
Foundation under agreement No.\ DMS-1128155. Any opinions, findings and
conclusions or recommendations expressed in this material are those of the
author and do not necessarily reflect the views of the National Science
Foundation.}
\thanks{This work was partially supported by ERC Grant 279438,
Approximate Algebraic Structure and Applications.}
\thanks{This work was partially supported by the AIM
Square on alternative proofs of the Davenport-Heilbronn Theorems. }
\begin{document}

\begin{abstract}
The Maass-form twisted Shintani $\sL$-functions are introduced, and some of
their analytic properties are studied.  These functions contain data
regarding the distribution of shapes of cubic rings.
\end{abstract}

\maketitle

\section{Introduction}
The space of binary cubic forms over a commutative ring  $\sR$
\begin{equation}
 V_{\sR} = \left\{ f(x,y) = ax^3 + bx^2y + cxy^2 + dy^3: a,b,c,d \in \sR
\right\}
\end{equation}
has a rich algebraic structure. $\GL_2(\sR)$ acts
by changing coordinates:
\begin{equation}
g\cdot f(x,y) = f((x,y)g^t).
\end{equation}
 Over $\bC$, this makes $V_{\bC}$ an example of a
prehomogeneous vector space. Over $\bR$, $V_{\bR}$  splits into
a pair of
open $\GL_2(\bR)$ orbits, having positive
and negative discriminant, and a singular set
having discriminant 0. The non-singular forms have finite stabilizer, so that
these are naturally identified with finite quotients
of $\GL_2(\bR)$. Over $\zed$, one considers in addition to the
lattice $L=V_\zed$, the dual lattice
\begin{equation}
 \hat{L} = \left\{f \in L: 3| b, c \right\}.
\end{equation}
For a fixed non-zero $m \in \zed$ those integral forms from $L$ and
$\hat{L}$ of discriminant $m$ each split into finitely many orbits, the
number of which is the class number, denoted $h(m)$ and $\hat{h}(m)$,
respectively. The
space of integral binary cubic forms taken modulo $\GL_2(\zed)$-equivalence
has extra significance, as it is
in discriminant-preserving bijection with cubic rings taken up to isomorphism
\cite{DF64}, \cite{GGS02}, \cite{B04}.

 Shintani \cite{S72} introduced zeta functions enumerating the
class numbers $h(m), \hat{h}(m)$.  These Dirichlet series, initially defined
only in the half-plane $\{s \in \bC: \Re(s)>1\}$, have meromorphic continuation
to all of $\bC$ and satisfy a functional equation relating $s$ to $1-s$.
Shintani determined the poles and residues, and hence obtained strong results
on the average behavior of $h(m)$.

Having fixed a base point, a class of integral forms $\tilde{f}$ of non-zero
discriminant is identified with a point in $\GL_2(\zed)\backslash\GL_2(\bR)$,
and it is natural to ask for the distribution of these points on average.  We
call the point $g_f \in \GL_2(\zed) \backslash\GL_2(\bR)$ the `shape' of the
form $f$, a name which becomes more natural in the case that $f$ is associated
to an order of a cubic field, in which case $g_f$ describes the shape of the
corresponding lattice in its natural embedding. The distribution of these shapes
was studied by Terr \cite{T97}, who proved the asymptotic uniform distribution
of the shape of cubic orders and fields when ordered by discriminant, see also \cite{H15}.
In related work, the
author proved the quantitative equidistribution of 3-torsion ideal classes in
imaginary quadratic fields \cite{H16} and the corresponding equidistribution
statements for quartic and quintic fields have been demonstrated by
Bhargava and Harron \cite{BH13}.

The purpose of this note is to give a strong estimate for the
equidistribution of binary cubic forms
with respect to the cuspidal spectrum of $\SL_2(\bR)/\SL_2(\zed)$, by modifying
the method of Shintani. 
Let $\phi$ be a non-constant automorphic cusp form on
\begin{equation}\sX = \SO_2(\bR)\backslash\SL_2(\bR)/\SL_2(\zed),\end{equation}
which is an eigenfunction of the Hecke algebra, and extend $\phi$ to
$\GL_2(\bR)$
by projecting by a diagonal matrix.  Fix base forms $x_{\pm}^0$ of discriminant
$\pm 1$, and for each $m \neq 0$ choose representatives
$\left\{g_{i,m}\right\}_{i=1}^{h(m)},
\left\{\hat{g}_{i,m}\right\}_{i=1}^{\hat{h}(m)}$ such that
$\left\{g_{i,m}\cdot x_{\sgn(m)}^0\right\}_{i=1}^{h(m)}$,
$\left\{\hat{g}_{i,m}\cdot
x_{\sgn(m)}^0\right\}_{i=1}^{\hat{h}(m)}$ are representatives for the classes of
integral forms of discriminant $m$.   Denote $\Gamma(i,m)$, $\hat{\Gamma}(i,m)$
the stability
groups of $g_{i,m}\cdot
x_{\sgn(m)}^0$, resp. $\hat{g}_{i,m}\cdot
x_{\sgn(m)}^0$ in $\Gamma = \SL_2(\zed)$.  Introduce
`$\phi$-twisted Shintani $\sL$-functions' defined for
$\Re(s) > 4$ by absolutely convergent Dirichlet series
\begin{align}
 \sL_{\pm}(L, s; \phi)&= \sum_{\pm m \geq 1}
\frac{1}{|m|^s}\sum_{i=1}^{h(m)}\frac{\phi\left(g_{i,m}^{
-1}\right)}{|\Gamma(i,m)|},\\
\notag \sL_{\pm}(\hat{L}, s; \phi) &=  \sum_{\pm m \geq 1}
\frac{1}{|m|^s}\sum_{i=1}^{\hat{h}(m)}\frac{
\phi\left(\hat{g}_{i,m}^{-1}\right)}{\left|\hat{\Gamma}(i,
m)\right| }.
\end{align}
It is shown that these series may be factored from an orbital integral as in \cite{S72}.  The trick which permits introducing $\phi$ is
due to Selberg \cite{S56}, exploiting the mean-value property of harmonic functions.

The twisted $\sL$-functions appear less natural than the case $\phi = 1$ of \cite{S72}.  For instance, we are
not aware that a functional equation is satisfied, and suspect that none
exists.  We are, however, able to demonstrate the holomorphic continuation past
the region of absolute convergence, which is sufficient to prove
 equidistribution statements.

\begin{theorem}\label{maass_form_theorem}
 Let $\phi$ be a Maass Hecke-eigen cusp form on $\sX$. The $\phi$-twisted
Shintani $\sL$-functions extend to holomorphic
functions in the half-plane $\Re(s) > \frac{1}{8}$.
\end{theorem}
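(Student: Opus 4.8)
The plan is to adapt Shintani's analysis of the Shintani zeta integral, with the twist by $\phi$ entering through Selberg's averaging. As in \cite{S72}, the twisted $\sL$-functions are realized, up to an elementary factor depending only on a choice of Schwartz function $\Phi$ on $V_\bR$, by a \emph{$\phi$-twisted zeta integral}
\begin{equation}
 Z_\pm(\Phi,s;\phi) = \int_{\SL_2(\zed)\backslash \SL_2(\bR)} \phi(g) \int_0^\infty t^{2s}\!\!\sum_{x \in V_\zed^{\pm}}\!\! \Phi(t\, g\cdot x)\,\frac{dt}{t}\, dg,
\end{equation}
where $V_\zed^{\pm}$ denotes the nonsingular integral forms whose discriminant has sign $\pm$, and where the exponent of $t$ is normalized to match $|m|^{-s}$ (the analogous integral over $\hat L$ handles $\sL_\pm(\hat L,s;\phi)$). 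It therefore suffices to continue $Z_\pm(\Phi,s;\phi)$ holomorphically into $\Re(s) > \tfrac18$ for some $\Phi$ with nonvanishing elementary factor.

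First I would truncate the fundamental domain. Writing $g = n_u a_y k$ in Iwasawa coordinates, split $\SL_2(\zed)\backslash\SL_2(\bR)$ into the bulk $\sF_{\le T}$ (where $y \le T$) and the cuspidal tail $\sF_{>T}$. On $\sF_{\le T}$ the inner integral is, for each $g$, the Mellin transform of a Schwartz function of $t$, hence entire in $s$ and locally uniformly convergent; the region has finite volume and the sum over $x$ converges rapidly. Thus the bulk contributes an entire function, and the problem is reduced to the cuspidal tail.

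On $\sF_{>T}$ I would unfold the sum over $x \in V_\zed^{\pm}$ and apply Poisson summation in the lattice $V_\zed \subset V_\bR$, exactly as in Shintani's derivation of the functional equation. This decomposes the tail into: (i) contributions of the full lattice and its dual, governed by the values of $\Phi$ and $\widehat\Phi$ on the discriminant-zero strata; (ii) a path term at $y = T$; and (iii) a convergent remainder. In the untwisted case $\phi = 1$, terms of type (i) and (ii) are precisely the source of the poles of $\xi_\pm$. The crucial point is that each such term, after unfolding, is paired with the zeroth Fourier coefficient
\begin{equation}
 \int_0^1 \phi(n_u a_y k)\,du = 0,
\end{equation}
which vanishes because $\phi$ is a cusp form; hence the polar terms disappear and the twisted tail is holomorphic wherever it converges.

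The main obstacle is then to locate the half-plane of convergence of the remainder. After deleting the degenerate strata one is left with an integral over $y > T$ of the nonconstant part $\sum_{n\neq 0}\lambda_\phi(n)\, W_{it_\phi}(2\pi|n|y)\, e(nu)$ of $\phi$, paired against a sum over the nondegenerate dual lattice points whose coefficients, produced by Poisson summation applied to $t^{2s}\Phi(t\, g\cdot x)$, grow only polynomially in $y$. Exploiting the exponential decay of the Whittaker function and estimating the resulting Dirichlet-type series by Cauchy--Schwarz together with the Rankin--Selberg bound $\sum_{n \le X}|\lambda_\phi(n)|^2 \ll_\phi X$ — so that no appeal to Ramanujan is needed — one finds that the remainder converges absolutely for $\Re(s) > \tfrac18$, which gives the stated range. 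The delicate bookkeeping lies in step (iii): one must enumerate exactly which sub-strata of $V_\zed$ become degenerate under Poisson summation, verify that each degenerate contribution indeed carries the constant term of $\phi$ as a factor, and check that the $y = T$ path term cancels between the two halves of the split. Once this is in place, the estimate of the remainder is the only quantitative input, and it is what fixes the exponent $\tfrac18$.
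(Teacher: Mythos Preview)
There are two genuine gaps. First, the decomposition is in the wrong variable. Splitting the fundamental domain at height $y=T$ does \emph{not} make the bulk entire: for fixed $g\in\sF_{\le T}$ the function $t\mapsto\sum_{x\in V_\zed^\pm}\Phi(t\,g\cdot x)$ is not Schwartz in $t$; it grows like $t^{-4}$ as $t\to 0$ by the lattice point count, so its Mellin transform converges only for $\Re(s)$ large. The correct split, as in Shintani and in the paper, is in the scaling variable (equivalently at $\chi(g)=1$), after which Poisson summation on the small-$\chi$ piece produces the dual lattice and the entire functions $Z^+$. The singular strata are then integrated over all of $G^1/\Gamma$, regularized via an incomplete Eisenstein series; they do not simply ``pair with the constant term of $\phi$.'' Only the $\{0\}$-stratum and one of the two nonzero strata ($L_0(I)$) are disposed of by cuspidality-type arguments; the stratum $L_0(II)$ (forms $\Gamma\cdot(0,0,m,n)$) survives and is the entire source of the obstruction.

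Second, and more seriously, the exponent $\tfrac18$ is not obtainable from Cauchy--Schwarz and the Rankin--Selberg mean square alone. After the correct unfolding, the $L_0(II)$ contribution is governed by the double Dirichlet series
\[
G_\phi(x)=\sum_{\ell,m\ge 1}\frac{\rho_\phi(\ell m)}{\ell^{1+x}m^{1+3x}},
\]
and the half-plane of holomorphy of the twisted $\sL$-function is $\Re(s)>\tfrac14\inf\{\Re(w_1+w_2)\}$, the infimum taken over the region where a Mellin integrand involving $G_\phi\bigl(\tfrac{w_1+w_2-1}{2}\bigr)$ is holomorphic. Rankin--Selberg gives only absolute convergence of $G_\phi$ for $\Re(x)>0$, hence $\Re(w_1+w_2)>1$ and $\Re(s)>\tfrac14$. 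To reach $\tfrac18$ the paper factors $G_\phi(x)=L(1+x,\phi)L(1+3x,\phi)H_\phi(x)$ with $H_\phi$ an absolutely convergent Euler product for $\Re(x)>-\tfrac14$, and invokes the analytic continuation of the standard $L$-function $L(s,\phi)$ into the critical strip. This arithmetic input is what fixes the exponent, and it is missing from your outline.
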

\begin{rem}
Theorem \ref{maass_form_theorem} exhibits  substantial orthogonality
of the shapes of binary cubic forms to the Maass spectrum.  In particular, for $\psi \in C_c^\infty(\bR^+)$, the proof of Theorem \ref{maass_form_theorem} permits the estimate
\begin{equation}\label{weyl_sum}
 \sum_{\pm m \geq 1}
 \psi\left(\frac{|m|}{X}\right)\sum_{i=1}^{h(m)}\frac{\phi\left(g_{i,m}^{
-1}\right)}{|\Gamma(i,m)|} \ll_{\epsilon, \psi} X^{\frac{1}{8}+\epsilon}
\end{equation}
with the same estimate for dual forms. By comparison, the number of forms counted is order $X$. The best estimate in (\ref{weyl_sum}) obtainable from \cite{T97} is of order $X^{\frac{15}{16}}$, while \cite{BH13} proves the qualitative statement $o(X)$.
\end{rem}

\begin{rem}
Recall that a cusp form $\phi$ of
$\SO_2(\bR)\backslash\SL_2(\bR)/\SL_2(\zed)$ satisfies
an exponential decay condition in the cusp.  Our argument applies with
appropriate modifications also to the Eisenstein spectrum, and to automorphic
forms that transform on the left by a fixed character of $\SO_2(\bR)$. See
\cite{LRS09} for a general description of automorphic forms on
$\SL_2(\bR)/\SL_2(\zed)$.  We omit the details here, but intend to give detailed equidistribution statements in a future paper treating cubic fields.
\end{rem}

\subsection*{Related work}
We discovered the twisted $\sL$-functions during
work on the AIM Square on alternative proofs of the Davenport-Heilbronn
theorems.  See work of Sato \cite{S94}, \cite{S06} for some related
objects.

\section{Background}

Set $G = \GL_2(\bR)$, $G^1 = \SL_2(\bR)$, $G^+ = \{g \in G: \det g > 0\}$,
$\Gamma = \SL_2(\zed)$, $\Gamma_\infty = \Gamma \cap \begin{pmatrix}\pm 1 &0\\ *
& \pm 1\end{pmatrix}$ and standard subgroups\footnote{$c(\theta) = \cos(2\pi
\theta)$, $s(\theta) = \sin(2\pi \theta)$}
\begin{align}
 K &= \left\{k_\theta = \begin{pmatrix} c(\theta)& s(\theta)\\ -s(\theta)&
c(\theta)\end{pmatrix}: \theta \in \bR/\zed\right\},\\
\notag A &= \left\{a_t = \begin{pmatrix} t &0\\ 0 & \frac{1}{t}\end{pmatrix}: t
\in
\bR_{>0}\right\},\\
\notag N &= \left\{n_u = \begin{pmatrix} 1&0\\ u &1\end{pmatrix}: u \in
\bR\right\}.
\end{align}
Haar measure is normalized on $G^1$ by setting, for $f \in L^1(G^1)$,
\begin{align}
 \int_{G^1}f(g)dg 
 &= \int_{\bR/\zed} \int_{\bR^+}\int_\bR f(k_\theta a_t n_u)d\theta
\frac{dt}{t^3}du
\end{align}
and, for $f \in L^1(G)$,
\begin{equation}
 \int_{G^+} f(g)dg = \int_{\bR^+} \int_{G^1}f\left(\begin{pmatrix}\ell &0\\
0&\ell \end{pmatrix}g\right)dg \frac{d\ell}{\ell}.
\end{equation}

\subsection{Automorphic forms}
For consistency with Shintani we work on $L^2(K\backslash G^1/\Gamma)$, with the
lattice quotient on the right.  This differs from many modern authors. See \cite{Y11} for a summary of the results discussed here, and note the normalization $y = t^2$.

A convenient basis for $L^2(K\backslash G^1/ \Gamma)$ consists in joint
eigenfunctions of the Laplacian and the Hecke operators.  These
automorphic forms split into discrete and continuous spectrum.  The discrete
spectrum has an $L^2$ basis of Hecke-eigen Maass forms while the continuous
spectrum is spanned by the real analytic Eisenstein series.

Let $\phi(g)$ be a Hecke-eigen Maass form with Laplace eigenvalue $\lambda =
s(1-s)$, $s = \frac{1}{2} + i t_\phi$.  The Maass forms split into even and
odd forms.  An even Maass form $\phi$ has a Fourier development in the parabolic
direction
\begin{equation}\label{maass_fourier_dev}
 \phi(g) = 2t \sum_{n=1}^{\infty}\rho_\phi(n) K_{s-\frac{1}{2}}\left(2\pi n
t^2\right)\cos(2\pi nu)
\end{equation}
whereas an odd form replaces $\cos(\cdot)$ with $\sin(\cdot)$ in the Fourier
expansion.
We use the Mellin transforms
\begin{align}
 \int_0^\infty K_\nu(x)x^{s-1}dx &= 2^{s-2}\Gamma\left(\frac{s+\nu}{2}
\right)\Gamma\left(\frac{s-\nu}{2} \right), \qquad \Re s > |\Re \nu|,\\
 \notag \int_0^\infty \cos(x)x^{s-1}dx &= \Gamma(s)\cos\left(\frac{\pi s}{2}
\right), \qquad 0 <\Re s < 1.
\end{align}
We assume the Maass forms considered are even, although the
argument applies to odd forms without change.
Let the Maass forms  be Hecke-normalized, that is, $\rho_\phi(1) =1$.
This means that the Fourier coefficients satisfy the Hecke relations
\begin{equation}
 \rho_\phi(m)\rho_\phi(n) = \sum_{d | \GCD(m,n)}\rho_{\phi}\left(\frac{mn}{d^2}
\right),
\end{equation}
from which it follows that there exists constant $C>1$ such that for all primes
$p$ and $n \geq 1$,
\begin{equation}
 |\rho_\phi(p^n)| \leq \left(C(1 + |\rho_\phi(p)|)\right)^n.
\end{equation}
The sup bound
\begin{equation}
 |\rho_\phi(n)| \ll n^{\frac{7}{64}+\epsilon}
 \end{equation}
was proven in \cite{KS03} while the $L^2$-bound
\begin{equation}
 \sum_{n \leq X}\left|\rho_\phi(n)\right|^2 \ll X
\end{equation}
follows from Rankin-Selberg theory.

We follow Shintani's convention regarding the real analytic Eisenstein series,
which puts the symmetry line for these forms at $\Re(z) = 0$.  Define
\begin{equation}\label{def_real_analytic_eisenstein}
E(z,g) = \sum_{\gamma \in \Gamma/\Gamma_\infty}
t(g \gamma)^{z+1}
\end{equation}
 the real analytic Eisenstein series with complex parameter
$z$.  This satisfies a functional equation
\begin{equation}
\xi(z + 1)E(z,g) = \xi(1-z)E(-z,g); \qquad \xi(z) =
\pi^{-\frac{z}{2}}\Gamma\left(\frac{z}{2}\right)\zeta(z)
\end{equation}
and has a Fourier development in $z \neq 0$ given by
\begin{align}\label{eisenstein_fourier_dev}
E(z,g)&= t^{z+1}+ t^{1-z}\frac{\xi(z)}{\xi(z+1)}\\\notag & \qquad+
\frac{4t}{\xi(z+1)} \sum_{m=1}^\infty
\eta_{\frac{z}{2}}(m)K_{\frac{z}{2}}(2\pi m t^2)\cos 2\pi m u,\\\notag
\eta_{\frac{z}{2}}(m)&=\sum_{ab = m}\left(\frac{a}{b} \right)^{\frac{z}{2}}.
\end{align}

Say that $f \in C(K\backslash G^1/\Gamma)$ is of polynomial growth if $f$ is
bounded by
a polynomial in $\theta, u, t$, similarly, is Schwarz class if it decays when
multiplied by any polynomial in $\theta, u, t$. The Maass forms are Schwarz
class, while the Eisenstein series has polynomial growth.  After subtracting
the constant term in the Fourier expansion, the resulting modified Eisenstein
series again is Schwarz class.

Due to convergence issues resulting from the constant term it is convenient to
work with a truncated Eisenstein
series.  Let $\Psi$ denote the space of entire
functions such that for all $\psi \in \Psi$, for all $-\infty < C_1 < C_2 <
\infty$, for all $ N >0$,
\begin{equation}
 \sup_{C_1 < \Re (w) < C_2}\left(1 + (\Im w)^2\right)^N |\psi(w)| < \infty.
\end{equation}
For $\psi \in \Psi$ and $\Re(w) > 1$ define the incomplete Eisenstein series
at $\psi$ by choosing $1 < c < \Re (w)$ and setting
\begin{equation}
\sE(\psi, w; g) = \oint_{\Re (z) = c}\psi(z)\frac{E(z,g)}{w-z}dz.
\end{equation}

\subsection{Binary cubic forms}
$G$ acts naturally on the space
\begin{equation}
V_\bR = \left\{ax^3 + bx^2y + cxy^2 + dy^3: (a,b,c,d) \in
\bR^4\right\}
\end{equation}
of binary cubic forms via, for $f \in V_{\bR}$ and $g \in G$,
\begin{equation}
 g\cdot f(x,y) = f((x,y)\cdot g^t).
\end{equation}
The discriminant
$D$, which is a homogeneous polynomial of degree four on $V_{\bR}$, is a
relative
invariant:  $D(g\cdot f) = \chi(g)D(f)$
where $\chi(g) = \det(g)^6$. One identifies the dual space of $V_{\bR}$ with
$\bR^4$ via alternating
pairing \begin{equation}
\langle x, y \rangle = x_4y_1 - \frac{1}{3}x_3y_2 + \frac{1}{3}x_2y_3 -
x_1y_4.
\end{equation}
Let $\tau$ be the map $V_{\bR}\to V_{\bR}$ carrying each basis vector to its
dual basis
vector; the discriminant $\hat{D}$ on the dual space is normalized such that
$\tau$ is discriminant preserving.  There is an involution $\iota$ on $G$ given
by
\begin{equation}
 g^\iota = \begin{pmatrix} 0&-1\\ 1 &0\end{pmatrix} (g^{-1})^t \begin{pmatrix}
0 & 1\\ -1 &0\end{pmatrix}.
\end{equation}
This satisfies, for all $g \in G$, $x \in V_\bR$, $y \in \hat{V}_\bR$,
\begin{equation}
 \langle x, y \rangle = \langle g\cdot x, g^\iota \cdot y \rangle.
\end{equation}

The set of forms of zero discriminant are called the singular set, $S$. The
non-singular forms split into spaces $V_+$ and $V_-$ of positive and negative
discriminant.  The space $V_+$ is a single $G^+$ orbit
with representative $x_+ = (0,1,-1,0)$ and stability group
\begin{equation}
 I_{x_+} = \left\{I, \begin{pmatrix}0&1\\-1&-1\end{pmatrix}, \begin{pmatrix}-1
&-1\\ 1&0\end{pmatrix} \right\}.
\end{equation}
Set $x_+^0 = \lambda_+ x_+$, rescaled to have discriminant 1.
$V_-$ is also a single $G^+$ orbit with representative $x_- = (0,1,0,1)$ with
trivial stabilizer. $x_-^0 = \lambda_- x_-$ is also rescaled to have
discriminant 1.  

Set $w_1 = (0,0,1,0),  w_2 = (0,0,0,1).$
The singular set is the disjoint union
\begin{equation}
 S = \{0\}\sqcup G^1 \cdot w_1 \sqcup G^1 \cdot w_2.
\end{equation}
The stability group for the action of $G^1$ on $w_1$ is trivial $I_{w_1} =
\{1\}$, while on $w_2$ it is
$ I_{w_2}= N.$

Over $\zed$ write $L$ and $\hat{L}$ for the lattices of integral forms and
their dual, and write $L_0$ and $\hat{L}_0$ for those integral forms, resp.
dual forms, of discriminant zero. $L_0$
and $\hat{L}_0$ are the disjoint unions
\begin{align}
 L_0 &= \{0\}\sqcup L_0(I) \sqcup L_0(II), \qquad \hat{L}_0 &=
\{0\}\sqcup L_0(I)\sqcup \hat{L}_0(II),
\end{align}
with
\begin{align}L_0(I) &= \bigsqcup_{m=1}^\infty \bigsqcup_{\Gamma/\Gamma\cap
N}\gamma
\cdot (0,0,0,m),\\
\notag L_0(II) &= \bigsqcup_{m=1}^\infty
\bigsqcup_{n=0}^{m-1}\bigsqcup_{\gamma \in \Gamma} \gamma \cdot (0,0,m,n),\\
\notag \hat{L}_0(II) &= \bigsqcup_{m=1}^\infty
\bigsqcup_{n=0}^{3m-1}\bigsqcup_{\gamma \in \Gamma}\gamma \cdot (0,0,
3m,n).
\end{align}

Given Schwarz function $f \in \sS(V_{\bR})$ one has the Fourier transforms
\begin{align}
 \hat{f}(x) &= \int_{V_\bR}f(y)e(\langle x, y\rangle)dy, \qquad  f(x ) =
\frac{1}{9}\int_{V_\bR}\hat{f}(y)e(\langle x,y \rangle)dy.
\end{align}
For $\ell \in \bR_{>0}$ write $f_\ell(x) = f(\ell x)$. Say that $f$ is
left-$K$-invariant if, for all $x \in V_{\bR}$, for all $k \in K$, $f(k\cdot
x)= f(x)$.  One easily checks that $f$ and $\hat{f}$ are simultaneously
left-$K$-invariant.  Say that $f$ is right-$K$-invariant if, for both choices of
$\pm$, for all $g \in G^+$, for all $k \in K$, $f(gk\cdot x_{\pm}) = f(g \cdot
x_{\pm})$.  Let $\phi$ be a Maass form and let $f$ be right-$K$-invariant.
Identify $f_\ell\left(g
\cdot x_{\pm}^0\right)$ as functions $f_{\ell, \pm}$
on $G^1/I_{x_{\pm}}$.
Interpret, for $h \in G^1$,
\begin{equation}
 \int_{G^1}
f_\ell\left(g \cdot x_{\sgn \;m}^0\right)\phi\left(gh\right)dg
\end{equation}
as group convolution on $G^1$, written $\check{f}_{\ell, \pm}*\phi(h)$. The
result
obtained is left-$K$-invariant.  Since the Laplacian and Hecke operators
commute with translation, it follows by multiplicity 1 that $\check{f}_{\ell,
\pm} *
\phi =
\lambda(f_{\ell, \pm}, \phi) \phi$ is a scalar multiple times $\phi$.

\section{Dirichlet series}
Let $\phi \in C( K \backslash G^1/\Gamma)$ be a Hecke-eigen Maass form and
extend $\phi$ to $G$ by projecting
onto $G^1$. Note that this means that  $\phi(g) =
\phi(g^\iota)$ since $g$ and $g^{\iota}$ differ by a scalar.
Let $f \in \sS(V_{\bR})$.
Adapting Shintani's construction, introduce orbital integrals
\begin{align}
 Z(f, L; s, \phi) &= \int_{ G_+/\Gamma}\chi(g)^s
\phi\left(g\right)\sum_{x
\in L \setminus L_0}f(g\cdot x)dg\\ \notag
 Z(f, \hat{L}; s, \phi) &= \int_{ G_+/\Gamma}\chi(g)^s
\phi\left(g\right)\sum_{x
\in \hat{L} \setminus \hat{L}_0}f(g\cdot x)dg.
\end{align}

\begin{lemma}
 Let $f \in \sS(V_{\bR})$ be right-$K$-invariant.  Let
$\phi$ be a
Maass form satisfying, for $\ell >0$,
\begin{equation}
\check{f}_{\ell, \pm} *
\phi =
\lambda(f_{\ell, \pm}, \phi) \phi.
\end{equation}
For $\Re(s)$ sufficiently large, the orbital integrals $ Z(f, L; s, \phi), Z(f,
\hat{L}; s, \phi)$ satisfy
\begin{align}
Z(f, L; s, \phi)
=&\sL_+(L, s; \phi)\int_{0}^\infty \lambda(f_{\ell, +},
\phi)\ell^{12s-1}d\ell \\ \notag& + \sL_-(L, s; \phi)\int_{0}^\infty
\lambda(f_{\ell,
-},
\phi)\ell^{12s-1}d\ell\\ \notag
Z(f, \hat{L};s,\phi)
=&\sL_+(\hat{L}, s; \phi)\int_{0}^\infty \lambda(f_{\ell, +},
\phi)\ell^{12s-1}d\ell \\& \notag+ \sL_-(\hat{L}, s; \phi)\int_{0}^\infty
\lambda(f_{\ell,
-},
\phi)\ell^{12s-1}d\ell.
 \end{align}
\end{lemma}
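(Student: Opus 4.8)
The proof follows Shintani's unfolding of the orbital integral, with the Maass-form factor absorbed by the convolution identity assumed in the hypothesis; throughout I would take $\Re(s)$ large enough that the triple sum--integral is absolutely convergent (this is the one substantive analytic point, discussed at the end), so that all interchanges below are legitimate. First I would decompose the lattice sum into $\Gamma$-orbits: $L\setminus L_0$ is the disjoint union over the signs, over $\pm m\ge 1$, and over $i$, of the orbit of $x_{i,m}:=g_{i,m}\cdot x^0_{\sgn m}$, and the orbit map $\gamma\mapsto\gamma\cdot x_{i,m}$ realizes it as $\Gamma/\Gamma(i,m)$, so that summing over all of $\Gamma$ overcounts by $|\Gamma(i,m)|$. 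Hence
\[
\sum_{x\in L\setminus L_0}f(g\cdot x)=\sum_{\pm}\sum_{\pm m\ge 1}\sum_{i=1}^{h(m)}\frac{1}{|\Gamma(i,m)|}\sum_{\gamma\in\Gamma}f\big((g\gamma)\cdot x_{i,m}\big).
\]
Since $\chi(g\gamma)^s=\chi(g)^s$ and $\phi(g\gamma)=\phi(g)$ for $\gamma\in\Gamma$, inserting this into $Z(f,L;s,\phi)$ and unfolding via $\int_{G_+/\Gamma}\sum_{\gamma\in\Gamma}F(g\gamma)\,dg=\int_{G_+}F(g)\,dg$ yields
\[
Z(f,L;s,\phi)=\sum_{\pm}\sum_{\pm m\ge 1}\sum_{i=1}^{h(m)}\frac{1}{|\Gamma(i,m)|}\int_{G_+}\chi(g)^s\phi(g)\,f(g\cdot x_{i,m})\,dg.
\]

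Next I would reduce each term to the base point. Substituting $g\mapsto g\,g_{i,m}^{-1}$ and using that $\chi(g_{i,m})=|m|$ --- because $x_{i,m}$ has discriminant $m$ while $x^0_{\sgn m}$ has discriminant $\sgn(m)$ --- turns the $(i,m)$-integral into $|m|^{-s}\int_{G_+}\chi(g)^s\,\phi\big(g\,g_{i,m}^{-1}\big)\,f(g\cdot x^0_{\sgn m})\,dg$. I would then split Haar measure on $G_+$ along the central $\bR^+$ as normalized above: the relative invariant $\chi$ contributes the weight $\ell^{12s}$, the measure a factor $d\ell/\ell$, the function $f(g\cdot x^0_{\sgn m})$ becomes $f_{\ell,\sgn m}$ on $G^1$ (matching the homogeneity of the cubic action), and $\phi\big(g\,g_{i,m}^{-1}\big)$ descends on the $G^1$-part to $\phi\big(h\,g_{i,m}^{-1}\big)$ since $\phi$ is insensitive to the central scalar. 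The surviving $G^1$-integral is precisely the convolution appearing in the hypothesis,
\[
\int_{G^1}f_\ell(h\cdot x^0_{\sgn m})\,\phi\big(h\,g_{i,m}^{-1}\big)\,dh=\check f_{\ell,\sgn m}*\phi\big(g_{i,m}^{-1}\big)=\lambda(f_{\ell,\sgn m},\phi)\,\phi\big(g_{i,m}^{-1}\big),
\]
which is the Selberg mean-value step in disguise.

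Collecting the three factors, the $(i,m)$-term equals $|\Gamma(i,m)|^{-1}|m|^{-s}\phi(g_{i,m}^{-1})\int_0^\infty\lambda(f_{\ell,\sgn m},\phi)\,\ell^{12s-1}\,d\ell$; summing over $i$ and $m$ and separating the two signs, the factor $\sum_{\pm m\ge 1}|m|^{-s}\sum_i\phi(g_{i,m}^{-1})/|\Gamma(i,m)|$ is exactly $\sL_{\pm}(L,s;\phi)$, which gives the claimed identity, and the argument for $\hat L$ is word-for-word the same with $\hat L\setminus\hat L_0$, the $\hat g_{i,m}$, and the $\hat\Gamma(i,m)$. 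The one real obstacle is the absolute convergence that licenses the rearrangements: one needs a Shintani-style reduction-theory bound showing that $\int_{G_+/\Gamma}\chi(g)^{\Re(s)}|\phi(g)|\sum_{x\in L\setminus L_0}|f(g\cdot x)|\,dg<\infty$ for $\Re(s)$ large, using the rapid decay of the cusp form $\phi$ in the cusp of $\Gamma\backslash G^1$, the Schwarz decay of $f$ on $V_\bR$, and the fact that integral forms of bounded discriminant are polynomially many; it is exactly this estimate that confines the factorization to a right half-plane.
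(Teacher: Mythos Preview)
Your proof is correct and follows essentially the same route as the paper: decompose $L\setminus L_0$ into $\Gamma$-orbits indexed by $(m,i)$, unfold the $\Gamma$-quotient, translate by $g_{i,m}^{-1}$ to pull out $|m|^{-s}$, split off the central $\bR^+$ to obtain the Mellin integral in $\ell$, and recognize the remaining $G^1$-integral as the convolution $\check f_{\ell,\pm}*\phi$ evaluated at $g_{i,m}^{-1}$. Your added discussion of the absolute convergence justifying the interchanges is a welcome elaboration of what the paper leaves implicit in ``for $\Re(s)$ sufficiently large.''
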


\begin{proof}
One finds for
$\Re(s)$ sufficiently large
\begin{align}
 &Z(f, L; s, \phi) = \int_{G_+/\Gamma}\sum_{m \neq
0}\sum_{i=1}^{h(m)}\frac{\chi(g)^s
\phi\left(g\right)}{|\Gamma(i,m)|} \sum_{\gamma \in
\Gamma}f( g\gamma g_{i,m} \cdot x_{\sgn \; m}^0 )dg\\
\notag&= \sum_{m \neq
0}\frac{1}{|m|^s}\sum_{i=1}^{h(m)}\frac{\ell^{12s-1}}{|\Gamma(i,m)|} \int_{0}^{\infty}\int_ { G^1
}
f_\ell\left(g \cdot x_{\sgn \;m}^0\right)\phi\left(gg_{i,m}^{-1}\right)dg d\ell
\\\notag&= \sum_{m \neq 0}
\frac{1}{|m|^s}\sum_{i=1}^{h(m)}\frac{\phi\left(
g_{i,m}^{-1}\right)}{|\Gamma(i,m)|}\int_{0}^\infty
\lambda(f_{\ell, \sgn \; m},
\phi)\ell^{12s}\frac{d\ell}{\ell}\\
\notag&=\sL_+(L, s; \phi)\int_{0}^\infty \lambda(f_{\ell, +},
\phi)\ell^{12s}\frac{d\ell}{\ell} + \sL_-(L, s; \phi)\int_{0}^\infty
\lambda(f_{\ell,
-},
\phi)\ell^{12s}\frac{d\ell}{\ell}.
 \end{align}
The proof for the dual $\sL$-functions is the same.
\end{proof}

Following Shintani, introduce
\begin{align}
 Z^+(f, L; s, \phi) & = \int_{G_+/\Gamma, \chi(g)\geq 1}\chi(g)^s
\phi(g)\sum_{x \in L \setminus L_0}f(g\cdot x)dg,\\
\notag Z^+(f, \hat{L}; s, \phi) & = \int_{G_+/\Gamma, \chi(g)\geq 1}\chi(g)^s
\phi(g)\sum_{x \in \hat{L} \setminus \hat{L}_0}f(g\cdot x)dg.
\end{align}
These functions converge absolutely and are entire.

The following proposition is the analogue of \cite{S72}, Proposition 2.14.
\begin{proposition}
 For $\Re(s)> 4$,
 \begin{align}\notag
  &Z(f,L;s,\phi) = Z^+(f,L;s,\phi) + Z^+(\hat{f}, \hat{L}; 1-s, \phi)\\
\label{small_det}&
- \int_{\substack{G_+/\Gamma,\\ \chi(g) <
1}}\chi(g)^s \phi(g)\left\{\sum_{x \in L_0}f(g \cdot x) -
\chi(g)^{-1}\sum_{x \in \hat{L}_0}\hat{f}(g^\iota \cdot x)
\right\}dg\\ \notag
 &Z(f, \hat{L}; s, \phi) = Z^+(f,\hat{L};s,\phi) + \frac{1}{9}Z^+(\hat{f}, L;
1-s, \phi)\\ \label{dual_small_det}&
- \int_{\substack{G_+/\Gamma,\\ \chi(g) <
1}}\chi(g)^s \phi(g)\left\{\sum_{x \in
\hat{L}_0}f(g \cdot x) - \frac{1}{9}\chi(g)^{-1}\sum_{x \in
L_0}\hat{f}(g^\iota \cdot x) \right\}dg.
 \end{align}
\end{proposition}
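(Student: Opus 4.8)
The plan is to follow Shintani's proof of Proposition 2.14 in \cite{S72}: split the orbital integral along the hypersurface $\chi(g)=1$, apply Poisson summation over the full lattice on the low-determinant piece, and then use the involution $\iota$ to fold that piece back into a copy of $Z^+$ for the dual lattice, what remains being the sum over the singular set. Inserting the cusp form $\phi$ changes nothing in the algebra of this manipulation; its only effect is to make the convergence of the intermediate integrals painless, since $\phi$ decays faster than any polynomial in the cusp of $G_+/\Gamma$.

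In detail, fix $s$ with $\Re(s)>4$, which is comfortably inside the region where every integral written below converges absolutely (see the last paragraph). First split
\[
Z(f,L;s,\phi) = \int_{\substack{G_+/\Gamma\\ \chi(g)\ge 1}}\chi(g)^s\phi(g)\sum_{x\in L\setminus L_0}f(g\cdot x)\,dg + \int_{\substack{G_+/\Gamma\\ \chi(g)<1}}\chi(g)^s\phi(g)\sum_{x\in L\setminus L_0}f(g\cdot x)\,dg,
\]
the first summand being $Z^+(f,L;s,\phi)$. In the second summand write $\sum_{x\in L\setminus L_0}f(g\cdot x) = \sum_{x\in L}f(g\cdot x) - \sum_{x\in L_0}f(g\cdot x)$ and apply Poisson summation to the full-lattice sum. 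With the Fourier conventions fixed above, for which $L$ has unit covolume and is carried to $\hat{L}$ by the pairing, together with the equivariance $\langle g\cdot x,g^\iota\cdot y\rangle=\langle x,y\rangle$ and the Jacobian $\det(g)^6=\chi(g)$ of $x\mapsto g\cdot x$ on $V_\bR$, one obtains
\[
\sum_{x\in L}f(g\cdot x) = \chi(g)^{-1}\sum_{y\in\hat{L}}\hat{f}(g^\iota\cdot y).
\]
Now split $\sum_{y\in\hat{L}}$ as $\sum_{y\in\hat{L}\setminus\hat{L}_0}+\sum_{y\in\hat{L}_0}$, and in the nonsingular part substitute $g\mapsto g^\iota$, so that $\int_{\chi(g)<1}\chi(g)^{s-1}\phi(g)\sum_{y\in\hat{L}\setminus\hat{L}_0}\hat{f}(g^\iota\cdot y)\,dg = Z^+(\hat{f},\hat{L};1-s,\phi)$: here one uses that $\iota$ preserves Haar measure on $G_+$ and preserves $\Gamma$, that $\chi(g^\iota)=\chi(g)^{-1}$ so $\{\chi<1\}$ is carried to $\{\chi>1\}$, and that $\phi(g^\iota)=\phi(g)$ since $g$ and $g^\iota$ differ by a scalar on which the extension of $\phi$ from $G^1$ does not depend. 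Collecting the two remaining singular sums, both still integrated over $\chi(g)<1$, yields precisely (\ref{small_det}). The identity (\ref{dual_small_det}) is proved the same way, starting from $Z(f,\hat{L};s,\phi)$ and applying Poisson summation over $\hat{L}$; since $\hat{L}$ has covolume $[\zed^4:\hat{L}]=9$ and is carried to $L$ by the pairing, each application of Poisson now produces the factor $\tfrac19$, which is the origin of the $\tfrac19$'s in (\ref{dual_small_det}).

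The only substantive point is the absolute-convergence bookkeeping that licenses interchanging Poisson summation with the $g$-integral and the subsequent change of variables $g\mapsto g^\iota$. The delicate feature is that the \emph{full}-lattice sum $\sum_{x\in L}f(g\cdot x)$ grows like $\chi(g)^{-1}$ as $\chi(g)\to 0$ (its Poisson main term is a constant multiple of $\chi(g)^{-1}\hat{f}(0)$), while $\sum_{x\in L_0}f(g\cdot x)$ grows like a smaller power of $\chi(g)^{-1}$; one must therefore check that the majorized integrals $\int_{\chi(g)<1}\chi(g)^{\Re(s)-1}|\phi(g)|\bigl(\sum_{y\in\hat{L}}|\hat{f}(g^\iota\cdot y)|+\cdots\bigr)\,dg$ are finite, which for $\Re(s)>4$ is immediate near $\chi(g)=0$. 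In the cusp of $G_+/\Gamma$ the lattice sums grow at most polynomially in the cusp coordinates whereas $\phi$, being cuspidal, decays faster than any polynomial, so the cusp contributes nothing — this is exactly where Shintani had to truncate an Eisenstein-type constant term in the untwisted case and where the present argument is cleaner. With absolute convergence in hand, Fubini's theorem and the measure-preserving substitution are justified, and the stated identities follow.
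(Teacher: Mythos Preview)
Your argument is correct and is essentially the same as the paper's: split at $\chi(g)=1$, apply Poisson summation on the full lattice over the region $\chi(g)<1$, then change variables by $g\mapsto g^\iota$ in the nonsingular dual piece to recover $Z^+(\hat f,\hat L;1-s,\phi)$. The paper's proof is terser and omits the $g\mapsto g^\iota$ step and the convergence bookkeeping you spell out; your closing remark about where the cusp form makes things ``cleaner'' is slightly misplaced (at this stage Shintani's untwisted argument is identical---the Eisenstein truncation enters later, in evaluating the singular integrals), but this does not affect the proof.
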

\begin{proof}
 Write
 \begin{align}
 & Z(f,L;s,\phi) \\\notag&= \int_{ G_+/ \Gamma}\chi(g)^s
\phi(g)\sum_{x
\in L }f(g\cdot x)dg - \int_{ G_+/ \Gamma}\chi(g)^s
\phi(g)\sum_{x
\in L_0 }f(g\cdot x)dg.
 \end{align}
Split the first integral at $\det(g) \geq 1$.  In the integral with $\det(g)<
1$ perform Poisson summation in the sum over $L$, using that $F_g(x)= f(g\cdot
x)$ has $\hat{F}_g(y) = \frac{1}{\chi(g)}\hat{f}(g^\iota \cdot y)$. The proof
for $\hat{L}$ is similar.
\end{proof}

The objective now is to give the holomorphic continuation of (\ref{small_det})
and (\ref{dual_small_det}). This closely follows
the evaluation of Shintani leading up to the Corollary to Proposition 2.16 of
\cite{S72}.

Given $f \in \sS(V_{\bR})$ which is left-$K$-invariant, introduce
distributions,
for $z, z_1, z_2 \in \bC$ and $u \in \bR$,
\begin{align}
 \Sigma_1(f,z_1, z_2) &= \int_{0}^{\infty}\int_0^\infty
(f(0,0,t,u)+ f(0,0,t,-u))t^{z_1-1}u^{z_2-1} dt du\\
\notag \Sigma_2(f,z)&= \int_0^\infty f(0,0,0,u)u^{z-1}du\\
\notag \Sigma_3(f,z,u) &= \int_0^\infty f(0,0,t,u)t^{z-1}dt.
\end{align}

Following Shintani, for $g \in  G^1/\Gamma$ define
\begin{align}
 J_L(f)(g) &= \sum_{x \in L_0}f(g\cdot x), \qquad
 J_{\hat{L}}(f)(g) = \sum_{x \in \hat{L}_0} f(g \cdot x).
\end{align}
It follows from \cite{S72} Lemma 2.10 that for $f \in \sS(V_\bR)$, for $\phi$
of at most polynomial growth,
$\phi(g)J_L(f)(g)$ and $\phi(g)J_{\hat{L}}(\hat{f})(g)$ have at most polynomial
growth, while
\begin{equation}\phi(g)\left(J_L(f)(g) - J_{\hat{L}}(\hat{f})(g)
\right)\end{equation} is a Schwarz-class function on $G^1/\Gamma$.

The starting point is the formula (see e.g. \cite{S72}, p. 174)
\begin{align}
\label{Eisenstein_eval}&\frac{\psi(1)}{\xi(2)}\int_{
G^1/ \Gamma}\left(J_L(f)(g) -
J_{\hat{L}}(\hat{f})(g) \right) \phi(g)dg
\\\notag & = \lim_{w \downarrow 1}(w-1)
\int_{ G^1/\Gamma} \left(J_L(f)(g) -
J_{\hat{L}}(\hat{f})(g) \right) \phi(g) \sE(\psi, w; g) dg.
\end{align}
We have the following evaluation of integrals.
\begin{lemma}
 Let $\phi$ be a Maass form. Then \begin{align}
  \int_{G^1/\Gamma}\sE(\psi, w; g)\phi(g)dg &= 0.
  \end{align}
\end{lemma}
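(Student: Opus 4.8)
The plan is to unfold the incomplete Eisenstein series $\sE(\psi,w;g)$ against $\phi$ and to exploit the cuspidality of $\phi$. Recall that for $\Re(w)>1$ and $1<c<\Re(w)$ we have $\sE(\psi,w;g) = \oint_{\Re(z)=c}\psi(z)\frac{E(z,g)}{w-z}dz$, so by Fubini (justified by the rapid decay of $\psi$ on vertical lines together with the polynomial growth of $E(z,g)$ and the Schwarz-class decay of $\phi$ in the cusp) it suffices to show that
\begin{equation}
\int_{G^1/\Gamma} E(z,g)\phi(g)\,dg = 0
\end{equation}
for $\Re(z)=c>1$, since then the $z$-integral of $\psi(z)\cdot 0/(w-z)$ vanishes identically.

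The key step is the classical Rankin--Selberg unfolding. Writing $E(z,g) = \sum_{\gamma \in \Gamma/\Gamma_\infty} t(g\gamma)^{z+1}$, which converges absolutely for $\Re(z)>1$, one unfolds the integral over $G^1/\Gamma$ to an integral over $G^1/\Gamma_\infty$:
\begin{equation}
\int_{G^1/\Gamma} E(z,g)\phi(g)\,dg = \int_{G^1/\Gamma_\infty} t(g)^{z+1}\phi(g)\,dg.
\end{equation}
Using the Iwasawa-type coordinates $g = k_\theta a_t n_u$ and the normalization of Haar measure from the Background section, the fundamental domain for $\Gamma_\infty$ on the right is $u \in \bR/\zed$ (after accounting for $\Gamma_\infty$ acting by integer unipotent translations and the sign matrices), while $\theta \in \bR/\zed$ and $t \in \bR^+$ range freely. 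Since $t(g)$ is independent of $\theta$ and $u$, and $\phi$ is left-$K$-invariant, the $\theta$-integral is trivial and the inner integral becomes $\int_0^1 \phi(a_t n_u)\,du$. This is precisely the constant term of $\phi$ in its parabolic Fourier expansion, and because $\phi$ is a cusp form — indeed, by the Fourier development (\ref{maass_fourier_dev}), $\phi(a_t n_u) = 2t\sum_{n\geq 1}\rho_\phi(n)K_{s-\frac12}(2\pi n t^2)\cos(2\pi n u)$ has no $n=0$ term — we get $\int_0^1 \phi(a_t n_u)\,du = 0$ for every $t$. Hence the whole integral vanishes.

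The main obstacle is the justification of the interchange of the contour integral in $z$ with the $g$-integral over the non-compact quotient $G^1/\Gamma$, and relatedly the absolute convergence of the unfolded integral $\int_{G^1/\Gamma_\infty} t(g)^{z+1}|\phi(g)|\,dg$. For the latter, one splits the $t$-integral at $t=1$: for $t \geq 1$ (the cusp) the Schwarz-class decay of $\phi$ — equivalently the exponential decay $K_\nu(2\pi t^2) \ll e^{-2\pi t^2}$ coming from (\ref{maass_fourier_dev}) — dominates any power $t^{z+1}$, and for $t \leq 1$ one uses that $\phi$ is bounded on the compact part while $\int_0^1 t^{z+1}\frac{dt}{t^3}$ converges for $\Re(z)>1$. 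Once absolute convergence is in hand the unfolding and the Fubini interchange are routine, and the cuspidality of $\phi$ closes the argument; the fact that $\phi$ is even rather than odd is immaterial here, as $\int_0^1\sin(2\pi n u)\,du = 0$ as well.
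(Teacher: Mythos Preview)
Your proof is correct and follows essentially the same route as the paper: open the incomplete Eisenstein series, unfold the sum over $\Gamma/\Gamma_\infty$ to pass from $G^1/\Gamma$ to $G^1/\Gamma_\infty$ (the paper goes one step further to $G^1/(\Gamma\cap N)$ with a factor $\tfrac12$), and then kill the integral by integrating in the parabolic direction using that $\phi$ has no constant term. The only cosmetic difference is that you apply Fubini first to reduce to $\int_{G^1/\Gamma}E(z,g)\phi(g)\,dg=0$, whereas the paper keeps the $z$-contour inside; both are justified by the same growth/decay estimates you spell out.
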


\begin{proof}
Let $1 < c < \Re(w)$. Opening $\sE(\psi, w; g)$ as a contour integral, then
unfolding the Eisenstein
series, one obtains
\begin{equation}
\int_{G^1/\Gamma} \sE(\psi,w;g)\phi(g)dg =
\frac{1}{2}\int_{G^1/\Gamma \cap N} \left( \oint_{\Re (z) =
c}\frac{t(g)^{1 +
z}}{w-z} \psi(z)dz \right) \phi(g) dg
\end{equation}
This integral now vanishes by integrating in the parabolic direction, since the
Maass form has no constant term.
\end{proof}
Let $\psi_1, \psi_2$ be two holomorphic functions in the half-plane $\Re(w)>4$.
 Say these functions are equivalent $\psi_1 \sim \psi_2$ if $(\psi_1 -
\psi_2)$ may be meromorphically continued to $\Re(w)> 0$ and is holomorphic in
a neighborhood of $w = 1$. Equivalent functions are interchangeable in the integrand of (\ref{Eisenstein_eval}).

Let $\phi \in C(K \backslash G^1/\Gamma)$.  Set
\begin{align}
\Theta_{\psi}^{(1)}(w;\phi) &= \int_{ G^1/\Gamma} \sE(\psi, w;
g)\phi(g)\sum_{x \in L_0(I)}f(g\cdot x) dg\\ \notag
\Theta_{\psi}^{(2)}(w;\phi)&= \int_{  G^1/\Gamma}\sE(\psi,
w;g)\phi(g)\sum_{x\in L_0(II)}f(g\cdot x)dg\\ \notag
\hat{\Theta}_\psi^{(2)}(w;\phi)&= \int_{  G^1/\Gamma}\sE(\psi,
w;g)\phi(g)\sum_{x\in \hat{L}_0(II)}f(g\cdot x)dg.
\end{align}
Also,  write $\phi_c(t)$ for its constant
term, found by integrating away the parabolic direction.
\begin{lemma}
Let $f \in\sS(V_\bR)$ be left-$K$-invariant.  Given Maass form $\phi$,
\begin{align}
 &\Theta_{\psi}^{(1)}(w;\phi) \sim 0.
\end{align}
\end{lemma}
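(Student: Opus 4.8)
The plan is to unfold the sum over $L_0(I)$ against the Eisenstein series $\sE(\psi,w;g)$ and reduce the integral to something that is manifestly $\sim 0$ by the vanishing lemma just proved. Recall that
\[
L_0(I) = \bigsqcup_{m=1}^\infty \bigsqcup_{\gamma \in \Gamma/\Gamma\cap N} \gamma\cdot(0,0,0,m),
\]
so that $\sum_{x\in L_0(I)} f(g\cdot x) = \sum_{m=1}^\infty \sum_{\gamma\in\Gamma/\Gamma\cap N} f(g\gamma\cdot(0,0,0,m))$. Since $\phi$ and the summation range $G^1/\Gamma$ are $\Gamma$-invariant on the right, and since $\sE(\psi,w;g)$ is $\Gamma$-invariant, the standard unfolding collapses the $\gamma$-sum: the integral becomes an integral over $G^1/(\Gamma\cap N)$ of $\sE(\psi,w;g)\phi(g)\sum_{m\geq 1} f(g\cdot(0,0,0,m))\,dg$. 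Recall $(0,0,0,m) = m w_2$ in the earlier notation and $I_{w_2}=N$, so the summand depends on $g$ only through a smaller quotient; more precisely, write $g = k_\theta a_t n_u$ and use left-$K$-invariance of $f$ to see $f(g\cdot(0,0,0,m))$ depends only on $t$ (and $m$), not on $u$ or $\theta$.

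First I would carry out this unfolding carefully, tracking the factor of $\frac12$ that arises (as in the preceding lemma, from $\pm I$) and the Haar measure decomposition $dg = d\theta\,\frac{dt}{t^3}\,du$. After unfolding, the $u$-integral over $\bR/\zed$ (coming from $\Gamma_\infty\backslash N$) acts only on $\sE(\psi,w;g)\phi(g)$, since the rest of the integrand is $u$-independent. At this point I would expand $\sE(\psi,w;g)$ as the contour integral $\oint_{\Re z = c}\psi(z)\frac{E(z,g)}{w-z}\,dz$ and note that integrating $E(z,g)$ over $u\in\bR/\zed$ picks out its constant term $t^{z+1} + t^{1-z}\xi(z)/\xi(z+1)$; simultaneously, integrating $\phi(g)$ over $u\in\bR/\zed$ annihilates it entirely because a Maass cusp form has no constant term. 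Hence the whole integrand vanishes after the $u$-integration, so $\Theta_\psi^{(1)}(w;\phi) = 0$ identically, which in particular gives $\Theta_\psi^{(1)}(w;\phi)\sim 0$.

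The main technical point — and the one I would write out most carefully — is justifying the unfolding: one must check absolute convergence of the unfolded integral in the region $\Re(w)>4$ (equivalently $\Re(c)$ large, with $1<c<\Re(w)$) so that Fubini applies and the interchange of the $\gamma$-sum, the $m$-sum, the $z$-contour, and the $G^1/(\Gamma\cap N)$ integral is legitimate. Here one uses that $f\in\sS(V_\bR)$ forces rapid decay of $\sum_{m\geq 1} f(g\cdot(0,0,0,m))$ as a function on the relevant quotient — for instance it decays in $t$ as $t\downarrow 0$ because the argument $(0,0,0,mt^{-1})$ blows up, cf.\ \cite{S72}, Lemma 2.10 — together with the polynomial growth of $E(z,g)$ and the decay of $\psi$ in vertical strips and the rapid decay of the cusp form $\phi$ in the cusp. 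Once convergence is in hand, the vanishing is immediate from the cuspidality of $\phi$ exactly as in the previous lemma, and no meromorphic continuation is actually needed — the equivalence $\sim 0$ is attained on the nose. I would remark that this is the cleanest of the three $\Theta$-type evaluations precisely because $L_0(I)$ is a single $\Gamma$-orbit family fibered over the cusp, so the cusp form's lack of a constant term kills it outright.
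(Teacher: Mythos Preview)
There is a genuine gap. After unfolding the sum over $\Gamma/(\Gamma\cap N)$, the $u$-integral over $\bR/\zed$ acts on the \emph{product} $E(z,g)\phi(g)$, and the constant term of a product is not the product of the constant terms. Concretely, with the Fourier developments (\ref{maass_fourier_dev}) and (\ref{eisenstein_fourier_dev}) one has
\[
\int_0^1 E(z,k_\theta a_t n_u)\,\phi(k_\theta a_t n_u)\,du
=\frac{4t^2}{\xi(z+1)}\sum_{n\ge 1}\eta_{\frac{z}{2}}(n)\,\rho_\phi(n)\,K_{\frac{z}{2}}(2\pi n t^2)\,K_{s-\frac12}(2\pi n t^2),
\]
coming from the diagonal pairing $\int_0^1\cos(2\pi n u)\cos(2\pi m u)\,du=\tfrac12\delta_{n,m}$ between the nonconstant Fourier modes of $E$ and of $\phi$. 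This is nonzero in general, so $\Theta_\psi^{(1)}(w;\phi)$ does \emph{not} vanish identically. Your argument confuses the present situation with the preceding lemma: there one unfolded the \emph{Eisenstein series}, leaving $t^{1+z}\phi(g)$ in the integrand, and then the $u$-integral acted on $\phi$ alone. Here you unfold the $L_0(I)$ sum instead, and $E(z,g)$ remains intact.

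The paper's proof proceeds by computing $(E(z,\cdot)\phi)_c$ as above, inserting it into the unfolded integral, opening $f(0,0,0,t^{-3}m)$ via $\Sigma_2$, and rewriting everything as a double contour integral in $(u,z)$ involving the Dirichlet series $F_\phi(u;z)=\sum_{n\ge 1}\eta_{z/2}(n)\rho_\phi(n)n^{-u}$. One then shifts the $z$-contour to $\Re(z)=0$; since $F_\phi$ and the remaining $K$-Bessel integral are holomorphic there, the expression is holomorphic in $\Re(w)>0$, hence $\sim 0$. The conclusion is only an equivalence, not an identity.
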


\begin{proof}
 Let $\Re (w) > 2$.  Write
 \begin{align}
  \Theta_\psi^{(1)}(w;\phi)&= \int_{
G^1/\Gamma}\sum_{m=1}^\infty
\sum_{\Gamma/\Gamma \cap N}f( g\gamma\cdot (0,0,0,m))
\sE(\psi, w;
g)\phi(g)dg.
 \end{align}
 Introduce the Dirichlet series
 \begin{equation}
  F_\phi(u; z) = \sum_{n \geq 1}
\frac{\eta_{\frac{z}{2}}(n)\rho_\phi(n)}{n^u},
 \end{equation}
 which converges absolutely in $\Re(u) - \frac{|\Re(z)|}{2} > 1$.
After unfolding the sum over $\Gamma/\Gamma\cap N$ and integrating
in the compact and parabolic directions, this becomes (see \cite{S72} p. 178, middle display, for the first evaluation)
\begin{align}
 &\Theta_\psi^{(1)}(w;\phi)= \sum_{m=1}^\infty \int_0^\infty
f(0,0,0, t^{-3}m)\left(\oint_{\Re (z) = 2} \frac{\left(E(z;\cdot)\phi
\right)_c(t)}{w-z}\psi(z) dz\right)\frac{dt}{t^3}
\\ \notag&= 
4\sum_{m=1}^\infty \int_0^\infty
f(0,0,0, t^{-3}m)\\& \notag \times \left(\oint_{\Re (z) = 2} \frac{\left(
\sum_{n \geq 1} \eta_{\frac{z}{2}}(n)\rho_\phi(n)
K_{\frac{z}{2}}(2\pi n t^2)K_{s-\frac{1}{2}}(2\pi n
t^2)\right)}{\xi(z+1)(w-z)}\psi(z)
dz\right)\frac{dt}{t}\\ \notag
&= 4 \oiint_{\substack{\Re (u,z)\\  = (3,2)}} \Sigma_2(f,u)\int_0^\infty
\sum_{m=1}^\infty
\sum_{n=1}^\infty
\frac{\eta_{\frac{z}{2}}(n)\rho_\phi(n)}{\xi(z+1)(w-z)}\\\notag&
\qquad \times
\left(\frac{t^3}{m}
\right)^u K_{\frac{z}{2}}(2\pi n t^2)K_{s-\frac{1}{2}}(2\pi n t^2) \psi(z)
\frac{dt}{t}dzdu\\ \notag
&= 4\oiint_{\substack{\Re (u,z)\\  = (3,2)}}\int_0^\infty
\frac{\Sigma_2(f,u)\zeta(u)F_\phi\left(\frac{3u}{2};z\right)}{\xi(z+1)(w-z)}
\psi(z)
 t^{3u}
K_{\frac{z}{2}}(2\pi  t^2)K_{s-\frac{1}{2}}(2\pi  t^2) \frac{dt}{t}dzdu.
\end{align}
Shift the $z$ contour to $\Re(z) = 0$
to verify that
$\Theta_\psi^{(1)}(w;\phi)$ is holomorphic in $\Re(w) > 0$.

\end{proof}

Introduce
\begin{equation}
 G_{\phi}(x) = \sum_{\ell,m =
1}^{\infty}\frac{\rho_{\phi}(\ell m)}{\ell^{1 + x}m^{1 +
3x}}.
\end{equation}
Form $\hat{G}_\phi$ by dilating the sum over $m$ by 3.

\begin{lemma}
Given Maass form $\phi$,  $G_\phi(x)$ is holomorphic in
the
half-plane $\Re(x)> \frac{-1}{4}$.
\end{lemma}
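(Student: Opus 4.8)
The strategy is to recognize $G_\phi(x)$, which converges absolutely for $\Re(x)$ large, as a ratio of classical $L$-functions, namely
\[
G_\phi(x)=\frac{L(1+x,\phi)\,L(1+3x,\phi)}{\zeta(2+4x)},\qquad L(s,\phi):=\sum_{n\geq 1}\frac{\rho_\phi(n)}{n^s},
\]
and then to read off the holomorphy in $\Re(x)>-\frac14$ from the meromorphic continuation of $\zeta$ and of the Hecke $L$-function of $\phi$.

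First I would record an inversion of the Hecke relations. Since $\rho_\phi$ is multiplicative, the identity $\rho_\phi(m)\rho_\phi(n)=\sum_{d\mid\GCD(m,n)}\rho_\phi(mn/d^2)$ reduces prime-by-prime to $\rho_\phi(p^a)\rho_\phi(p^b)=\sum_{k=0}^{\min(a,b)}\rho_\phi(p^{a+b-2k})$; subtracting the instance at $(a-1,b-1)$ gives $\rho_\phi(p^{a+b})=\rho_\phi(p^a)\rho_\phi(p^b)-\rho_\phi(p^{a-1})\rho_\phi(p^{b-1})$, hence
\[
\rho_\phi(\ell m)=\sum_{e\mid\GCD(\ell,m)}\mu(e)\,\rho_\phi(\ell/e)\,\rho_\phi(m/e).
\]
Next, using the pointwise bound $|\rho_\phi(n)|\ll n^{7/64+\epsilon}$ (even the trivial divisor bound suffices), the double series for $G_\phi(x)$ converges absolutely for $\Re(x)>\frac12$, say. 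In that region I would substitute the inversion formula, write $\ell=e\ell'$, $m=em'$, and separate the three sums, obtaining
\[
G_\phi(x)=\Bigl(\sum_{e\geq1}\frac{\mu(e)}{e^{2+4x}}\Bigr)\Bigl(\sum_{\ell'\geq1}\frac{\rho_\phi(\ell')}{\ell'^{1+x}}\Bigr)\Bigl(\sum_{m'\geq1}\frac{\rho_\phi(m')}{m'^{1+3x}}\Bigr)=\frac{L(1+x,\phi)\,L(1+3x,\phi)}{\zeta(2+4x)}.
\]
(Equivalently, one expands $G_\phi$ directly as an Euler product and evaluates the local factor
\[
\sum_{i,j\geq 0}\frac{\rho_\phi(p^{i+j})}{p^{i(1+x)+j(1+3x)}}=\frac{1-p^{-2-4x}}{\bigl(1-\rho_\phi(p)p^{-1-x}+p^{-2-2x}\bigr)\bigl(1-\rho_\phi(p)p^{-1-3x}+p^{-2-6x}\bigr)},
\]
which multiplies up to the same ratio.)

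Finally I would invoke the standard analytic facts about the right-hand side. The Hecke $L$-function of a cuspidal Maass form is entire, so $L(1+x,\phi)$ and $L(1+3x,\phi)$ are entire functions of $x$; and by the Euler product $\zeta(s)\neq 0$ for $\Re(s)>1$, so $\zeta(2+4x)$ is holomorphic and non-vanishing precisely when $\Re(2+4x)>1$, i.e. $\Re(x)>-\frac14$. Therefore the displayed ratio is holomorphic in $\Re(x)>-\frac14$, and since it agrees with $G_\phi(x)$ on the region of absolute convergence it provides the asserted continuation. The only point requiring genuine care is the rearrangement of the double series, which is legitimate because of absolute convergence in a right half-plane; once that is in place there is no substantial obstacle — the content of the lemma is exactly that $G_\phi$ is, up to the harmless factor $1/\zeta(2+4x)$, a product of two shifted copies of $L(s,\phi)$.
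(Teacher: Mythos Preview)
Your proof is correct. Both you and the paper factor $G_\phi(x)$ through $L(1+x,\phi)L(1+3x,\phi)$ via the Hecke multiplicativity, but the arguments differ in precision: the paper expands the local factor only far enough to see that the correction $H_\phi(x)=G_\phi(x)/\bigl(L(1+x,\phi)L(1+3x,\phi)\bigr)$ is given by an absolutely convergent Euler product in $\Re(x)>-\tfrac14$, without identifying $H_\phi$ explicitly; you instead M\"obius-invert the Hecke relation to obtain the closed formula $H_\phi(x)=1/\zeta(2+4x)$ and then appeal to the nonvanishing of $\zeta$ in $\Re(s)>1$. Your route is sharper (and in fact shows the paper's error term $O((1+|\rho_\phi(p)|)^3 p^{-3-7x})$ vanishes, since $\rho_\phi(p^2)-\rho_\phi(p)^2=-1$), while the paper's softer argument has the virtue of not requiring the exact inversion identity.
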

\begin{proof}
Let $L_p(s, \phi) = \sum_{n \geq 0}\frac{\rho_\phi(p^n)}{p^{ns}}$ be the local
factor in the $L$-function $L(s,\phi) = \prod_p L_p(s, \phi)$ in $\Re(s)>1$.
 For $\Re(x)>-\frac{1}{4}$, write the local factor at prime $p$ in $G_\phi(x)$
as
\begin{align}
G_{\phi, p}(x)=& L_p(1+x, \phi)L_p(1 + 3x, \phi)\\\notag& \times\left(1 +
\frac{\rho_\phi(p^2)-\rho_\phi(p)^2}{p^{2 + 4x}} +
O\left(\frac{(1 + |\rho_\phi(p)|)^{3}}{p^{3 + 7x}}\right)\right).
\end{align}
It follows that $G_\phi(x) = L(1+x, \phi)L(1 + 3x, \phi)H_\phi(x)$ where
$H_\phi$ is given by an absolutely convergent Euler product in $x >
-\frac{1}{4}.$
\end{proof}

The Archimedean counterpart to $G_\phi$ is
\begin{align}
 W_\phi(w_1, w_2)&= \frac{2^{w_2-3}}{\pi^{\frac{1+w_1+w_2}{2}}}
\Gamma\left(1-w_2
\right)\cos\left(\frac{\pi}{2}(1-w_2)
\right)
\\&\notag\times \Gamma\left(\frac{-1+w_1+3w_2 +2it_\phi}{4}
\right)\Gamma\left(\frac{-1+w_1+3w_2 -2it_\phi}{4} \right)
\end{align}
which is holomorphic in $\left\{w_1, w_2: \Re(w_1 + 3w_2)>1,
\Re(w_2)<1\right\}$.

\begin{lemma}
Let $f \in\sS(V_\bR)$ be left-$K$-invariant.  Given Maass form $\phi$,
\begin{align}
 \Theta_{\psi}^{(2)}(w;\phi)
\sim  &\frac{\psi(1)}{\xi(2)(w-1)} \oiint_{\substack{\Re
(w_1,w_2) = (1, \frac{1}{2})}}
\Sigma_1(f,w_1,
w_2)\\\notag & \times W_{\phi}(w_1,w_2) G_\phi\left(\frac{w_1
+ w_2-1}{2}\right)  d w_1 dw_2.
\end{align}
To obtain the corresponding terms for $\hat{\Theta}_{\psi}^{(2)}(w;\cdot)$
replace $G$ with $\hat{G}$.
\end{lemma}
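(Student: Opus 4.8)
The plan is to compute $\Theta_\psi^{(2)}(w;\phi)$ by the same unfolding strategy used for $\Theta_\psi^{(1)}$, but now the orbit $L_0(II)$ is a full $\Gamma$-orbit sum over $(0,0,m,n)$ with $1 \le m$ and $0 \le n \le m-1$, so the unfolding collapses the $\Gamma$-quotient entirely and leaves an integral over all of $G^1$ (equivalently $K \backslash G^1 \cong A N$ after using left-$K$-invariance of $f$). First I would open $\sE(\psi,w;g)$ as the contour integral $\oint_{\Re z = c} \psi(z) E(z,g)/(w-z)\,dz$ with $2 < c < \Re(w)$, and for the $E(z,g)$ factor insert its Fourier development \eqref{eisenstein_fourier_dev}: the constant-term pieces $t^{z+1}$ and $t^{1-z}\xi(z)/\xi(z+1)$ pair against the Fourier expansion \eqref{maass_fourier_dev} of $\phi$ to produce, via the $m$th Fourier coefficient of $\phi$, a factor $\rho_\phi$; the non-constant Bessel terms in $E$ pair with the Bessel terms in $\phi$ to produce a product $K_{z/2}(2\pi \ell t^2)K_{s-1/2}(2\pi \ell t^2)$ together with divisor-sum coefficients $\eta_{z/2}(\ell)\rho_\phi(\ell)$. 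Carrying out the $u$-integral over $\bR/\zed$ (using that $\sum_{n=0}^{m-1} e(n u) $-type sums localize the Fourier modes) and then the $\theta$-integral kills all but the diagonal contributions, exactly as on pp.~177--178 of \cite{S72}.

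Next I would separate the $t$-integral and the arithmetic Dirichlet series. The sum over $m$ (the coefficient of $(0,0,m,n)$) and over the Fourier index combine, after a Mellin transform in $t$ against $f(0,0,t,u)$, into the two-variable Mellin transform $\Sigma_1(f,w_1,w_2)$ (this is why $\Sigma_1$ was defined with the $u$-integral as well). The arithmetic part assembles into $G_\phi\!\left(\frac{w_1+w_2-1}{2}\right)$: the double sum $\sum_{\ell,m}\rho_\phi(\ell m)\ell^{-(1+x)}m^{-(1+3x)}$ arises because the $m$-weight and the $\ell$-weight from the $\eta$ and $\rho_\phi$ factors enter with exponents in ratio $1:3$, matching the $1:3$ scaling built into $G_\phi$. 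The Archimedean $t$-integral $\int_0^\infty t^{\,\cdot}\,K_{z/2}(2\pi t^2)K_{s-1/2}(2\pi t^2)\,\frac{dt}{t}$, after the substitution $t^2 \mapsto t$ and the Mellin--Barnes formula for a product of two $K$-Bessel functions (or by iterating the single-Bessel Mellin transform already quoted in the excerpt together with the cosine Mellin transform needed for the $\Sigma_2$-type piece), produces precisely the Gamma and cosine factors in $W_\phi(w_1,w_2)$, including the shift $s - \tfrac12 = i t_\phi$ in the arguments $\frac{-1+w_1+3w_2 \pm 2it_\phi}{4}$.

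Finally I would extract the $w=1$ behavior. Everything except the $1/(w-z)$ kernel is holomorphic in a neighborhood of the relevant contours, so shifting the $z$-contour from $\Re z = c > 1$ across the pole at $z=1$ of $E(z,g)$ — equivalently, picking up the residue of the incomplete Eisenstein series at $w=1$ — produces the stated main term with the factor $\frac{\psi(1)}{\xi(2)(w-1)}$ (the $\xi(2)^{-1}$ being the residue of $\xi(z)/\xi(z+1)$-type data, i.e.\ the residue of $E(z,g)$ at $z=1$), and the remaining contour integral is holomorphic near $w=1$, hence $\sim 0$ in the equivalence of the excerpt. The main obstacle I anticipate is bookkeeping, not analysis: one must carry out the unfolding and the $u,\theta$-integrations carefully enough to see that the divisor-type coefficients $\eta_{z/2}(\ell)\rho_\phi(\ell)$ and the residual $m$-sum collapse to the clean Euler product $G_\phi$, and one must track the constant factors ($2^{w_2-3}$, the powers of $\pi$, the cosine factor from the Mellin transform of $\cos$, and the $\xi(2)^{-1}$) through the residue calculation so that they match $W_\phi$ exactly; verifying absolute convergence on the shifted contours $\Re(w_1,w_2) = (1,\tfrac12)$ — which is where $G_\phi$ is evaluated at $\Re x = 0$, just inside its half-plane of holomorphy $\Re x > -\tfrac14$ from the previous lemma — requires the Kim--Sarnak bound and the Rankin--Selberg $L^2$-bound quoted earlier. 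The dual statement for $\hat\Theta_\psi^{(2)}$ is identical except that the $m$-sum runs over $(0,0,3m,n)$, which dilates the $m$-variable by $3$ and hence replaces $G_\phi$ by $\hat G_\phi$.
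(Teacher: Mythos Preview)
Your overall strategy matches the paper's: unfold the $\Gamma$-sum, open the incomplete Eisenstein series as a contour integral in $z$, separate $E(z,g)$ into constant and non-constant Fourier parts, show the non-constant contributions are holomorphic in $\Re(w)>0$ (hence $\sim 0$), and extract the residue at $z=1$ to isolate the main term carrying the factor $\psi(1)/(\xi(2)(w-1))$.

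There is, however, a real confusion in your account of where $G_\phi$ comes from. You write that ``the divisor-type coefficients $\eta_{z/2}(\ell)\rho_\phi(\ell)$ and the residual $m$-sum collapse to the clean Euler product $G_\phi$.'' In the paper the $\eta_{z/2}(\ell)\rho_\phi(\ell)$ terms (assembled into the Dirichlet series $F_\phi$) arise only from pairing the \emph{non-constant} part $\tilde E$ with $\phi$; that contribution is shown to be holomorphic in $\Re(w)>0$ by shifting the $z$-contour to $\Re z=0$ and is discarded---it never touches $G_\phi$. The main term comes solely from the residue of the constant-term piece $\tfrac{\xi(z)}{\xi(z+1)}t^{1-z}$ at $z=1$, after which there is no $z$-variable left: one is reduced to $\phi(a_t n_u)$ alone multiplied by the lattice sum $\sum_{m\ge 1}\sum_{n\in\zed} f\bigl(a_t\cdot(0,0,m,n+mu)\bigr)$ on $u\in[0,1]$. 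Expanding $\phi$ in its Fourier series and integrating in $u$ together with the $n$-sum forces the $\phi$-Fourier index to be a multiple of $m$, which is what produces $\rho_\phi(\ell m)$ with a \emph{product} argument and simultaneously extends the $u$-integral from $[0,1]$ to all of $\bR$. It is this mechanism, not any collapse of $\eta_{z/2}$, that yields the double sum $\sum_{\ell,m}\rho_\phi(\ell m)$ defining $G_\phi$; the $1{:}3$ weighting in the exponents then drops out of the substitutions $u\mapsto 2\pi\ell t^3 u$, $t\mapsto 2\pi\ell m t^2$ when the Bessel and cosine Mellin integrals are evaluated to give $W_\phi$. Your plan as written would not arrive at $G_\phi$ because you have routed its ingredients through the wrong branch of the $E$-decomposition.
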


\begin{proof}
 Calculate (see \cite{S72}, p.179, next to last display)
 \begin{align}
   &\Theta_{\psi}^{(2)}(w;\phi)\\\notag &= \int_{G^1/\Gamma}\sE(\psi,
w;g)\phi(g)\sum_{m=1}^{\infty}\sum_{n=-\infty}^{\infty} \sum_{\gamma \in
\Gamma/\Gamma\cap N }f( g\gamma \cdot(0,0,m,n))dg\\
\notag &= \int_0^\infty \int_0^1 \sum_{m=1}^\infty
\sum_{n=-\infty}^{\infty}f(a_t\cdot
(0,0,m,n+mu))\oint_{\Re (z) =5}\frac{E(z,g)\phi(g)}{w-z}
\psi(z)dz du\frac{dt}{t^3}.
 \end{align}
In the Eisenstein series, separate the constant term, writing $\tilde{E}(z,g)
=E(z,g)- E(z,g)_c$.  The contribution of the non-constant part of
$\tilde{E}(z,g)\phi(g) $ is holomorphic in $\Re(w) > 0$ by
tracing  \cite{S72}, p. 180, top.

 The contribution of $(\tilde{E}(z,g)\phi(g))_c$ is given by
\begin{align}
 &\frac{4}{\xi(z+1)}\int_0^{\infty}\int_{-\infty}^{\infty}\oint_{\Re(z) =
5}\sum_{m=1}^\infty f(0,0,t^{-1}m,
u)\frac{\psi(z)}{w-z}\\\notag
&\qquad \times \left(\sum_{n=1}^\infty \rho_\phi(n)\eta_{\frac{z}{2}}(n)K_{\frac{z}{2}} (2\pi nt^2)K_{s-\frac{1}{2}}(2\pi
nt^2)\right) dzdu t^2dt\\\notag
&=
\frac{2}{\xi(z+1)}\int_0^{\infty}\int_{-\infty}^{\infty}\oiint_{\substack{\Re(z,
z') \\=
(5,5)}}\Sigma_3(f,z',u)\frac{\zeta(z')F_{\phi}(\frac{3+z'}{2};z)}{(2\pi
)^{\frac{3+z'}{2}}}  \frac{\psi(z)}{w-z}\\\notag
&\qquad \times K_{\frac{z}{2}} (t)K_{s-\frac{1}{2}}(t)t^{\frac{3 +
z'}{2}}dz' dz du
\frac{dt}{t}.
\end{align}
Shift the $z$ contour to $\Re(z)=0$ to verify that this is holomorphic in
$\Re(w)>0$.

From the constant term of $E(z,g)$,
only the term $\frac{\xi(z)}{\xi(z+1)}t^{1-z}$ contributes, and from this term
one picks up a pole at $z = 1$. Following Shintani, this yields
\begin{align}
  \Theta_{\psi}^{(2)}(w;\phi) &\sim \frac{2\psi(1)}{\xi(2)(w-1)}\int_0^\infty
\int_{-\infty}^\infty\sum_{\ell,m =1}^{\infty}\rho_\phi(\ell
m)\\\notag& \times K_{s-\frac{1}{2}}(2\pi \ell m t^2)f(0,0,t^{-1}m,u)\cos(2\pi
\ell t^3 u)du t dt .
\end{align}
Split the integral over $u$ by writing $f(0,0, *, u)= f_+(0,0,*,u)$ for $u > 0$
and $f(0,0, *, u) = f_{-}(0,0,*,-u)$ for $u < 0$.  Now open $f$ by taking
Mellin transforms in both variables,
\begin{align}
& \Theta_{\psi}^{(2)}(w;\phi) \sim \frac{2\psi(1)}{\xi(2)(w-1)}\int_0^\infty
\int_{0}^\infty  \oiint_{\substack{\Re(w_1, w_2)\\=(1, \frac{1}{2})}}
\Sigma_1(f,w_1, w_2)\sum_{\ell,m
=1}^{\infty} \frac{\rho_\phi(\ell m)}{m^{w_1}}\\ \notag
&\times K_{s-\frac{1}{2}}(2\pi \ell m t^2) \cos(2\pi \ell t^3
u)u^{-w_2}t^{1+w_1}d w_1 dw_2 du dt .
\end{align}
Replace $u:= 2\pi \ell t^3 u$, then $t:= 2\pi \ell m t^2$ to obtain
\begin{align} &\Theta_{\psi}^{(2)}(w;\phi)\sim \frac{\psi(1)}{\xi(2)(w-1)}
\oiint_{\substack{\Re(w_1, w_2)\\=(1, \frac{1}{2})}}
\Sigma_1(f,w_1, w_2)
\frac{G_\phi\left(\frac{w_1 + w_2-1}{2} \right)}{(2\pi )^{\frac{1 +
w_1+w_2}{2}}}\\
\notag&\times \int_0^\infty
\int_{0}^\infty K_{s-\frac{1}{2}}(t) \cos(
u)\frac{du}{u^{w_2}} \frac{dt}{t^{\frac{3-w_1-3w_2}{2}}}d w_1 dw_2
\\ \notag
&\sim \frac{\psi(1)}{\xi(2)(w-1)} \\
\notag&\times\oiint_{\substack{\Re(w_1, w_2)\\=(1, \frac{1}{2})}}
\Sigma_1(f,w_1,
w_2)W_\phi(w_1,w_2)G_\phi\left(\frac{w_1 + w_2-1}{2} \right) d w_1 dw_2.
\end{align}

\end{proof}

Putting together the above lemmas we conclude
\begin{align}
& \int_{ G^1/\Gamma}\left(J_L(f) - J_{\hat{L}}(\hat{f})
\right)\phi(g)dg=\\
&\notag    \oiint_{\substack{\Re(w_1, w_2)\\=(1,
\frac{1}{2})}}
\Sigma_1(f,w_1,
w_2) W_\phi(w_1, w_2)G_\phi\left(\frac{w_1
+ w_2-1}{2} \right) d w_1 dw_2
\\\notag &- \text{ terms replacing $f, G$ with $\hat{f}, \hat{G}$}.
\end{align}

We  now holomorphically extend the orbital integrals.  Note that
\begin{align}
\Sigma_1(f_t, w_1, w_2) &= t^{-w_1-w_2}\Sigma_1(f, w_1, w_2).
\end{align}

\begin{proof}[Proof of Theorem \ref{maass_form_theorem}]
Choose $f$ which is bi-$K$-invariant, and arrange $f$ such that the integrals
$\int_0^\infty \lambda(f_{\ell, \pm}, \phi)\ell^{12s}\frac{d\ell}{\ell}$ are
entire. Since $Z^+$ is entire, it suffices to consider the integral
 \begin{align}
&-\int_{G^+/\Gamma, \chi(g)\leq 1} \chi(g)^s
\phi(g)\left\{\sum_{x \in L_0}f(g\cdot x) -\chi^{-1}(g)\sum_{x \in
\hat{L}_0}\hat{f}(g^{\iota}x) \right\}dg\\
\notag &= -\int_0^1 t^{12s}\int_{G^1/\Gamma}\phi(g_1)\left\{\sum_{x \in
L_0}f_{t^3}(g_1 \cdot x) - \sum_{x \in \hat{L}_0}\widehat{f_{t^3}}(g_1 \cdot x)
\right\}dg_1 \frac{dt}{t}.
 \end{align}
 The contribution from $f$ may be expressed
 \begin{align}
&- \int_0^1 \oiint_{\Re(w_1, w_2)=(1,
\frac{1}{2})}t^{12 s - 3w_1 - 3w_2}
\Sigma_1(f,w_1,
w_2) \\&\notag\times W_\phi(w_1,w_2)G_\phi\left(\frac{w_1
+ w_2-1}{2} \right)  d w_1 dw_2
\frac{dt}{t}.
 \end{align}
Shift the $w_1$
contour left to $\Re(w_1) = \epsilon$. This expression is holomorphic in
$\Re(s)> \frac{1}{8}+\epsilon$. The contribution from $\hat{f}$ may be expressed (see \cite{S72}, p. 182)
\begin{align}
 &\int_0^1 \oiint_{\Re(w_1, w_2)=(1,
\frac{1}{2})}t^{12 s-12 + 3w_1 + 3w_2}
\Sigma_1(\hat{f},w_1,
w_2) \\&\notag\times W_\phi(w_1,w_2)\hat{G}_\phi\left(\frac{w_1
+ w_2-1}{2} \right)  d w_1 dw_2
\frac{dt}{t}.
\end{align}
In this integral, integration with respect to $w_1$ may be pushed right as far
as we like, so that the integral itself is holomorphic.
\end{proof}

\end{document}